\setlist[description]{leftmargin=\parindent,labelindent=\parindent}
\numberwithin{equation}{section}
\newtheorem{thm}{Theorem}[section]
\newtheorem{lem}[thm]{Lemma}
\newtheorem{prop}[thm]{Proposition}
\theoremstyle{remark}
\newtheorem{rmk}[thm]{Remark}
\newtheorem{eg}[thm]{Example}
\theoremstyle{definition}
\newtheorem{defn}[thm]{Definition}
\newcommand{\Pf}{\operatorname{Pf}}
\newcommand{\Proj}{\operatorname{Proj}}
\newcommand{\codim}{\operatorname{codim}}
\newcommand{\Cl}{\operatorname{Cl}}
\newcommand{\Newt}{\operatorname{Newt}}
\newcommand{\sB}{\mathcal{B}}
\newcommand{\sI}{\mathcal{I}}
\newcommand{\sO}{\mathcal{O}}
\newcommand{\CC}{\mathbb{C}}
\newcommand{\PP}{\mathbb{P}}
\newcommand{\QQ}{\mathbb{Q}}
\newcommand{\ZZ}{\mathbb{Z}}
\newcommand{\frakm}{\mathfrak{m}}
\title{Constructing $\QQ$-Fano 3-folds \\ \`a la Prokhorov \& Reid}
\author{Tom Ducat}
\affil{RIMS, Kyoto University, Kyoto 606-8502, Japan. \\ \tt{taducat@kurims.kyoto-u.ac.jp}}
\date{\today}     
\begin{document}

\maketitle

\begin{abstract}
We generalise a construction by Prokhorov \& Reid of two families of $\QQ$-Fano 3-folds of index 2 to obtain five more families of $\QQ$-Fano 3-folds; four of index 2 and one of index 3. Two of the families constructed have the same Hilbert series and we study these cases in more detail.
\end{abstract}

\section*{Introduction}
\addcontentsline{toc}{section}{Introduction}

\paragraph{$\QQ$-Fano 3-folds.} In this paper a \emph{$\QQ$-Fano 3-fold} $X$ will be a normal projective 3-dimensional variety over $\CC$ with $-K_X$ ample and at worst $\QQ$-factorial terminal singularities. Unless otherwise stated, we will assume that $X$ has Picard rank $\rho_X=1$ (so that $X$ appears as the end product of a Minimal Model Program). 

The \emph{($\QQ$-Fano) index} of $X$ is given by:
\[ q_X := \max{ \left\{ q\in\ZZ_{\geq1} : -K_X = qA \text{ for some }A\in\Cl(X) \right\} } \]  
and any Weil divisor $A$ for which $-K_X=q_XA$ is called a \emph{primitive ample divisor} on $X$. We usually consider $X$ to be polarised by $A$, i.e.\ with an embedding into weighted projective space given by $\Proj$ of the graded ring $R(X,A)=\bigoplus_{k\geq0}H^0(X,\sO_X(kA))$. Some of the basic numerical invariants for $(X,A)$ are the \emph{codimension} of this embedding, the \emph{index} $q_X$, the \emph{degree} $A^3$ and the \emph{basket of singularities} $\sB_X$.

\paragraph{Prokhorov \& Reid's construction.} 
Starting from either $X=\PP^3$ or $X=X_2\subset\PP^4$, Prokhorov \& Reid \cite{prokreid} \S6.3.3 construct a $\QQ$-Fano 3-fold $Y$ by writing down a Sarkisov link which makes a divisorial extraction from a certain kind of singular curve $\Gamma\subset X$ followed by the Kawamata blowdown of a divisor $E\cong\PP(1,1,2)$ to a (polarised) $\tfrac13(2,2,1)$ cyclic quotient singularity $Q\in Y$. 

The two families of $\QQ$-Fano 3-folds $(Y,B)$ constructed in this way correspond to the cases A.1 and A.2 of Table \ref{Ytable}, which lists their numerical data as well as their ID in the Graded Ring Database \cite{grdb}. In particular they both have index 2.

\paragraph{The main result.}
We generalise Prokhorov \& Reid's construction to obtain families for all of the other cases of Table \ref{Ytable}.

\begin{thm} \label{mainThm}
For each of the cases A.1-4 of Table \ref{Ytable} there is a Sarkisov link starting from the corresponding $\QQ$-Fano 3-fold $X$ of Table \ref{Xtable} and ending with $Y$. This Sarkisov link starts with a divisorial extraction from an irreducible singular curve $\Gamma\subset X$ followed by the Kawamata blowdown to a cyclic quotient singularity $Q\in Y$ (the $\tfrac15(1,3,4)$ point in case A.4). 

The cases B.1-2 can also be constructed by the exactly the same method, however we must start with a (non-extremal) divisorial extraction from a \emph{reducible} curve $\Gamma\subset X$. Therefore the $Y$ that are constructed in this manner have Picard rank $\rho_Y>1$.

In case A.3 we construct two different families corresponding to divisorial extractions from two types of non-isomorphic curve singularity $P\in \Gamma\subset X$. For all of the other cases the family constructed is unique.
\end{thm}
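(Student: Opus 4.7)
I would proceed case-by-case, using the original Prokhorov--Reid argument in A.1-2 as a template. For each row of Table \ref{Ytable}: (i) write down explicit equations for the starting $\QQ$-Fano $X$ of Table \ref{Xtable} in its ambient weighted projective space; (ii) exhibit an irreducible singular curve $\Gamma\subset X$ whose singular point $P\in\Gamma$ has the required analytic type; (iii) define the divisorial extraction $f\colon Z\to X$ as a weighted blowup centered on $\Gamma$ with weights determined by the analytic structure at $P$. A toric local computation at $P$ then verifies that $Z$ is $\QQ$-factorial and terminal, that the exceptional locus is a single irreducible divisor, and that $f$ is an extremal divisorial Mori contraction with the predicted discrepancy.

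With the extraction in hand, I would play the 2-ray game on $Z$. Since $\rho_Z=2$ in the A-cases, $\overline{NE}(Z)$ has exactly two extremal rays; one is contracted by $f$, and the other initiates either a Mori flip or a divisorial contraction. Working with the Cox ring of $Z$, realised as the graded ring $R(X,A)$ augmented by the extra variable corresponding to the extracted divisor, one identifies the contractions as toric GIT wall-crossings. The plan is to show that after a (possibly empty) sequence of flips the game terminates with a divisorial contraction $g\colon Z'\to Y$ whose exceptional divisor $E$ is a weighted projective plane contracted to a cyclic quotient terminal singularity $Q\in Y$ of the predicted type (the $\tfrac15(1,3,4)$ point in A.4, and $\tfrac13(2,2,1)$ with $E\cong\PP(1,1,2)$ otherwise). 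The numerical invariants of $(Y,B)$ --- Hilbert series, basket, and degree --- are then read off and matched against Table \ref{Ytable}.

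In the B.1-2 cases the curve $\Gamma$ is forced to be reducible in order for the blowup combinatorics to produce the correct target; then $f$ is no longer extremal, $\rho_Z>2$, and one plays the 2-ray game on a suitable face of the Mori cone, producing $Y$ of Picard rank greater than one. In case A.3 two non-isomorphic analytic types of curve singularity $P\in\Gamma$ both yield valid blowup data and lead to two distinct $\QQ$-Fano families that happen to share a Hilbert series; a local comparison of the two extractions is needed to verify that the resulting families are genuinely different. In every other case, uniqueness should follow from the rigidity of the local model at $P$.

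The principal obstacle I expect is the explicit verification of the 2-ray game: one must certify that the birational map $Z\dashrightarrow Z'$ decomposes into legitimate Mori flips or flops and that the terminal contraction $g$ is a Kawamata blowdown onto precisely the predicted cyclic quotient point. In practice this reduces to controlled but intricate weighted-blowup calculations, careful tracking of discrepancies on each intermediate model, and a Hilbert series check at the end; the bookkeeping is most delicate in case A.4 where $Q$ is the higher-index quotient $\tfrac15(1,3,4)$ and in the B-cases where the non-extremality of $f$ complicates the analysis of $\overline{NE}(Z)$.
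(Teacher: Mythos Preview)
Your plan diverges from the paper's argument in two substantive ways, and one of them is a genuine gap.

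\textbf{The extraction is not a weighted blowup.} You propose to realise $f\colon Z\to X$ as a weighted blowup centred on $\Gamma$. Weighted blowups are defined for point centres; there is no standard notion of a weighted blowup along a curve, and the divisorial extraction from a singular curve in a smooth 3-fold is genuinely more delicate. In the paper the extraction is the $\Proj$ of the symbolic Rees algebra of $\mathcal{I}_{\Gamma/X}$, and its explicit description (including the verification that $X'$ acquires exactly one $\tfrac1r(1,1,-1)$ point and that the strict transform of a fixed surface $E$ is unchanged) is obtained by the serial unprojection method of \cite{duc1}. Without this, your ``toric local computation at $P$'' has no model to compute with, and the claimed terminality and extremality of $f$ are unsupported.

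\textbf{No 2-ray game is needed.} The paper does not run a 2-ray game or search for a contractible divisor via Cox rings and GIT. Instead it fixes in advance a surface $E\cong\PP(1,r,ra-1)\subset X$ containing $\Gamma$, chooses the degree $d$ of $\Gamma$ so that the class $B'=A'+\tfrac{r}{l}E'$ restricts trivially to the strict transform $E'$, and then uses Kodaira vanishing to show $|lB'|$ is free and contracts $E'$ directly as a Kawamata blowdown to a $\tfrac1l(1,r,ra-1)$ point. There are no flips: the link is extraction followed immediately by contraction. Your anticipated ``sequence of flips'' and the attendant bookkeeping are therefore unnecessary, and the difficulty you flag as the ``principal obstacle'' does not arise.

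\textbf{A numerical slip.} The contracted singularity is not $\tfrac13(2,2,1)$ in all cases other than A.4: it is a $\tfrac1l$ point with $l=ra+r-1$, so in A.3 and B.1 one gets $\tfrac15(1,2,4)$ and in B.2 one gets $\tfrac17(1,2,6)$. Correspondingly $E\cong\PP(1,1,2)$ only in cases A.1, A.2.
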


These are all of the possible examples that can be obtained by this method in its current form. In \S3 we study the two cases of A.3 in more detail.

\begin{table}
\arraycolsep=1.2pt\def\arraystretch{1.6}
\caption{The $\QQ$-Fano 3-folds $(Y,B)$ constructed in this paper.}
\begin{center}
\begin{tabular}{c|cccccm{2.2cm}c}
Case & $Y$ & $\rho_Y$ & $\codim$ & $q_Y$ & $B^3$ & \centering  $\sB_Y$ & GRDB \\ \hline
A.1 & $Y\subset \PP(1^4,2^2,3)$ & 1 & 3 & 2 & $\tfrac73$ & \centering $\tfrac13(1,2,2)$ & \#40836 \\
A.2 & $Y\subset \PP(1^5,2^2,3)$ & 1 &  4 & 2 & $\tfrac{10}3$ & \centering  $\tfrac13(1,2,2)$ & \#40933 \\ 
A.3 & $Y\subset \PP(1^3,2^2,3,4,5)$ & 1 &  4 & 2 & $\tfrac75$ & \centering  $\tfrac15(1,2,4)$ & \#40663 \\
A.4 & $Y\subset \PP(1^2,2^2,3^2,4,5)$ & 1 &  4 & 3 & $\tfrac35$ & \centering  $2\times\tfrac12(1,1,1)$, $\tfrac15(1,3,4)$ & \#41200 \\ \hline
B.1 & $Y\subset \PP(1^4,2^2,3,4,5)$ & 4 &  5 & 2 & $\tfrac{12}5$ & \centering  $\tfrac15(1,2,4)$ & \#40837 \\ 
B.2 & $Y\subset \PP(1^3,2^2,3,4,5,6,7)$ & 5 &  6 & 2 & $\tfrac{10}7$ & \centering  $\tfrac17(1,2,6)$ & \#40664  
\end{tabular}
\end{center}
\label{Ytable}
\end{table}%

\paragraph{Acknowledgements.}

The author is a International Research Fellow of the Japanese Society for the Promotion of Science and this work was supported by Grant-in-Aid for JSPS Fellows, No.\ 15F15771. I also thank Miles Reid for all his help and advice.

\section{Prokhorov \& Reid's construction}

We now outline our generalisation of Prokhorov \& Reid's construction \cite{prokreid} \S6.3.3, following their paper very closely. The original construction was only for the weighted projective plane $E=\PP(1,1,2)$.

\paragraph{The Sarkisov link.} Starting from a $\QQ$-Fano 3-fold $X$ the general idea is to construct a new $\QQ$-Fano 3-fold $Y$ by writing down a simple Sarkisov link of type II:
\begin{center}\begin{tikzpicture}
	\node at (0,0) {$X$};
	\node at (1,1) {$X'$};
	\node at (2,0) {$Y$};
	
	\draw[->] (0.7,0.7) -- node[above left] {$\sigma$} (0.3,0.3);
	\draw[->] (1.3,0.7) -- node[above right] {$\pi$} (1.7,0.3);
\end{tikzpicture}\end{center}
where $\sigma \colon X' \to X$ is a Mori extraction from a curve $\Gamma\subset X$ and $\pi \colon X' \to Y$ is the Kawamata blowup of a terminal cyclic quotient singularity $Q\in Y$. Prokhorov \& Reid construct such a Sarkisov link by the following method:

\paragraph{Step 1.} Take the weighted projective plane $E=\PP(1,r,ra-1)$ for some $r\geq2$, $a\geq1$. We consider weights of this form for the following two reasons:
\begin{itemize}
\item $E$ has a $\tfrac1r(1,-1)$ singularity (i.e.\ a type $A_{r-1}$ Du Val singularity),
\item $E$ is isomorphic to the exceptional divisor in the Kawamata blowup of the terminal cyclic quotient singularity $\tfrac1{ra+r-1}(1,r,ra-1)$. (In fact $E$ will become the exceptional divisor for $\pi$.) 
\end{itemize}

\paragraph{Step 2.} Consider the embedding of $E$ given by $\phi = \phi_{|\sO_E(r)|}$, i.e.\
\[ \phi \colon E \hookrightarrow \PP(1,1,a,ra-1),  \quad \phi(e_0:e_1:e_2) = (e_1:e_0^r:e_0e_2:e_2^r) \]
where the image $\phi(E)\subset\PP(1,1,a,ra-1)_{u:x:y:z}$ is given by the equation $xz=y^r$. We consider $\QQ$-Fano 3-folds $(X,A)$ with an embedding $E\hookrightarrow X$ such that $A|_E=\sO_E(r)$. The point of considering such embeddings is that the $\tfrac1r(1,-1)$ singularity of $E$ is supported at a smooth point $P=P_{u}\in X$.

\begin{lem}
Let $(X,A)$ be a $\QQ$-Fano 3-fold which admits an embedding $E\subset X$ as a $\QQ$-Cartier divisor such that $E\in |eA|$ and $\sO_E(A|_E)=\sO_E(r)$. Then $X$ is of the form:
\[ \text{(i)} \quad \PP(1,1,a,ra-1) \quad \text{or} \quad \text{(ii)} \quad X_{ra} \subset \PP(1,1,a,ra-1,e). \]
Moreover all possible choices are explicitly listed in Table \ref{possibleCases}.
\label{standardForm}
\end{lem}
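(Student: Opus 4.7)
The strategy is to recover the graded ring $R(X, A) = \bigoplus_{k \ge 0} H^0(X, \sO_X(kA))$ from $R(E, A|_E)$ by exploiting the divisorial embedding $E \subset X$ together with the hypothesis $A|_E = \sO_E(r)$. Since $E \cong \PP(1, r, ra - 1)$, the ring $R(E, \sO_E(r))$ is generated by the four sections $u = e_1$, $x = e_0^r$, $y = e_0 e_2$, $z = e_2^r$ in weights $1, 1, a, ra - 1$, with the single relation $xz = y^r$ in weight $ra$. Its Hilbert series is therefore
$$P_E(t) = \frac{1 - t^{ra}}{(1 - t)^2 (1 - t^a)(1 - t^{ra - 1})}.$$

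To transfer this to $X$, I would take the cohomology sequence of
$$0 \to \sO_X((k - e) A) \to \sO_X(kA) \to \sO_E(k A|_E) \to 0,$$
which comes from $E \in |eA|$. Kawamata--Viehweg vanishing on the terminal $\QQ$-factorial Fano $X$ gives $H^1(X, \sO_X(m A)) = 0$ for all $m \in \ZZ$, so the restriction maps $H^0(X, kA) \twoheadrightarrow H^0(E, k A|_E)$ are surjective and we obtain the Hilbert series identity $(1 - t^e) P_X(t) = P_E(t)$. Surjectivity also lets me lift the four generators $u, x, y, z$ of $R(E, A|_E)$ to sections of $R(X, A)$ in the same weights.

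I would then split into two cases according to whether the relation $xz - y^r$ persists on $X$. If it does, the lifts generate $R(X, A)$ polynomially and matching with the Hilbert series forces $e = ra$ and $X = \PP(1, 1, a, ra - 1)$, which is case (i); the cancellation $(1 - t^{ra})/(1 - t^e) = 1$ in $P_X(t)$ corroborates this. Otherwise $xz - y^r \ne 0$ in $R(X, A)_{ra}$, so a fifth generator $t$ of some weight $1 \le e < ra$ is required, and the unique weight-$ra$ relation takes the shape $xz - y^r + t \cdot f = 0$ with $\deg f = ra - e$, realising $X_{ra} \subset \PP(1, 1, a, ra - 1, e)$, which is case (ii).

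The main obstacle is the final enumeration producing Table \ref{possibleCases}: one has to run through admissible triples $(r, a, e)$ with $r \ge 2$, $a \ge 1$, $1 \le e \le ra$ and verify by hand that the resulting $X$ is a genuine $\QQ$-Fano 3-fold with at worst terminal $\QQ$-factorial singularities and that $E$ embeds compatibly with $A|_E = \sO_E(r)$. Terminality of the hypersurface $X_{ra}$ at the singular strata of the ambient weighted projective space, together with the constraint that the $\tfrac1r(1,-1)$ singularity of $E$ be supported at a smooth point of $X$, is what sharply cuts the list down to the finitely many entries of the table.
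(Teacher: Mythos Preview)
Your approach is essentially the same as the paper's: both compute $P_X(t)$ from $P_E(t)$ via vanishing and the restriction exact sequence, then read off the dichotomy (i)/(ii) from the shape of the Hilbert series, and finally cut down to a finite list by imposing terminality on the weighted hypersurface. The only differences are cosmetic: the paper first records $q = a + e + 1$ explicitly via adjunction (which you leave implicit) and then actually carries out the terminality analysis at the $\tfrac{1}{ra-1}$ coordinate point to bound $r\le 4$, $a\le 3$, whereas you correctly identify this step but leave the bounding as an assertion.
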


\begin{proof}
Let $q$ be the index of $X$, i.e.\ $K_X = -qA$. We have $K_E = (a+1)A|_E$ and so, by the adjunction formula, we get
\[  (a+1)A|_E = (K_X+E)|_E = -(q-e)A|_E \]
and hence $q=a+e+1$. Since $q>e$, by the Kodaira vanishing theorem we get that $H^1(X,(m-e)A) = H^1(X, K_X+(m+q-e)A)=0$ for all $m\geq0$. It follows from the standard short exact sequence:
\[ 0 \to \sO_X((m-e)A) \to \sO_X(mA) \to \sO_E((mA)|_E) \to 0 \]
that any such $(X,A)$ has Hilbert series:
\[ P_{X,A}(t) = \frac{P_{E,\sO_E(r)}(t)}{1-t^e} = \frac{1 - t^{ra}}{(1-t)^2(1-t^a)(1-t^{ra-1})(1-t^e)} \]

(i) If $e=ra$ then $X= \PP(1,1,a,ra-1)$ and $X$ is terminal if and only if $\tfrac1{ra-1}(1,1,a)$ is at worst a terminal singularity. This happens if and only if 
\[ a+1 \equiv 0 \mod ra-1 \quad \implies \quad a+1\geq ra-1 \quad \implies \quad a(r-1) \leq 2 \]
and the only solutions for $(r,a)$ are $(2,1)$, $(3,1)$ and $(2,2)$.

(ii) If $e\neq ra$ then $X$ can be written as a hypersurface 
\[ X_{ra} \subset \PP(1,1,a,ra-1,e)_{u,x,y,z,t} \] 
and a necessary condition for $X$ to have terminal singularities at $P_e$ is that $e < ra$. Therefore bounding which numerical cases occur is equivalent to bounding $ra$. Note that if $ra>2$ then $P_z\in X$ is necessarily a $\tfrac1{ra-1}(1,a,e)$ cyclic quotient singularity and we must impose conditions to ensure it is terminal. One of the following must occur:
\begin{enumerate}
\item $a+1=ra-1\implies a(r-1)=2$ and hence $r\leq 3$, $a\leq 2$.

\item $e+1=ra-1\implies e = ra - 2$. We claim that $e \leq \tfrac12ra$ and $ra\leq4$. If not then $e>\tfrac12ra>2$ implying $P_t\in X$, which is a hyperquotient singularity with weights $\tfrac1e(1,1,a,1;2)$. This is never terminal for $e > 2$.

\item $a+e=ra-1\implies e = ra - a - 1$. By a similar calculation we get $e \leq \tfrac12ra$, which implies $(r-2)a\leq2$ and $r\leq4$. If $r=3$ or $4$ then $a\leq 2$. If $r=2$ then $X$ has a $\tfrac1{a-1}(1,1,a,a;2)$ hyperquotient singularity which is only terminal if $a\leq3$.
\end{enumerate}

We have reduced to either ($r\leq4$, $a\leq2$) or $(r,a)=(2,3)$. Checking all of these possible cases gives the list in Table \ref{possibleCases}.
\end{proof}

In particular we note that $X$ has a $\tfrac1{ra-1}$ quotient singularity, where the $\tfrac1{ra-1}(1,r)$ point of $E$ is supported, and that $X$ is smooth along $E$ elsewhere.

 \begin{table}
\arraycolsep=1.4pt\def\arraystretch{1.6}
\caption{The possible cases for $E\subset X$ and the numerical invariants of Steps 4 \& 5.}
\begin{center}
\begin{tabular}{ccccccccc}
$X$ & $(r,a)$ & $q$ & $e$ & $q'$ & $l$ & $d$ & $(ra-1)\mid d\:?$ \\ \hline
$\PP^3$ & $(2,1)$ & 4 & 2 & 2 & 3 & 7	& $\checkmark$ \\
$\PP(1^3,2)$ & $(3,1)$ & 5 & 3	& 2 & 5 & 14 	& $\checkmark$ \\
$\PP(1^2,2,3)$ & $(2,2)$ & 7 & 4 		& 3 & 5 & 13 & $\times$ \\ \hline
$X_2\subset \PP^4$ & $(2,1)$ & 3 & 1 	& 2 & 3 & 5	& $\checkmark$ \\
$X_3\subset \PP(1^4,2)$ & $(3,1)$ & 3 & 1 	& 2 & 5 & 8	& $\checkmark$ \\
$X_4\subset \PP(1^3,2,3)$ & $(2,2)$ & 4 & 1 	& 3 & 5 & 7 & $\times$ \\
$X_4\subset \PP(1^3,2,3)$ & $(4,1)$ & 4 & 2 	& 2 & 7 & 15 & $\checkmark$ \\
$X_4\subset \PP(1^2,2^2,3)$ & $(2,2)$ & 5 & 2 	& 3 & 5 & 9 	& $\checkmark$ \\
$X_6\subset \PP(1^2,2,3,5)$ & $(2,3)$ &6 &  2 	& 4 & 7 & 11 & $\times$ \\ 
$X_6\subset \PP(1^2,2,3,5)$ & $(3,2)$ &6 &  3 	& 3 & 8 & 17 & $\times$ \\ 
\end{tabular}
\end{center}
\label{possibleCases}
\end{table}%

\paragraph{Step 3.} We let $\Gamma\subset E\subset X$ be an irreducible curve of degree $d$ (where $d$ will be chosen in Step 5) passing through the point $P\in X$ such that:
\begin{description}
\item[(i)] $\Gamma$ is contained in the smooth locus of $X$.
\end{description} 
If $ra>2$ then, since $E\subset X$ passes through exactly one singular point of $X$ (the $\tfrac1{ra-1}$-quotient point), this holds if and only if $\Gamma$ avoids the $\tfrac1{ra-1}$-quotient point of $E$. A necessary condition is that $(ra-1)\mid d$ (which is also trivially true if $ra=2$).
\begin{description}
\item[(ii)] $\Gamma$ is smooth apart from an `appropriately singular' point at $P\in X$.
\end{description} 
Here `appropriately singular' means that there should exist a Mori extraction:
\[ \sigma\colon (F\subset X') \to (\Gamma\subset X) \] 
(i.e.\ a divisorial extraction in the Mori category of terminal 3-folds) with exceptional divisor $F$, such that:
\begin{itemize}
\item $X'$ has exactly one singularity of index $>1$ which is of the form $\tfrac1r(1,1,-1)$,
\item $\sigma$ induces an isomorphism $E'\cong E$, where $E'$ is the birational transform of $E$. 
\end{itemize}
The extraction $\sigma$ is given by the blowup of the symbolic power algebra of the ideal sheaf $\sI_{\Gamma/X}$ (c.f.\ \cite{duc1} Proposition 1.4). The type of curve singularities that satisfy these two conditions will be explained more carefully in Proposition \ref{singularProp}, but for now we assume that such a divisorial extraction exists. 

We take $\sigma\colon X'\to X$ to be the left-hand side of our Sarkisov link. 

\begin{rmk}
Even if $\Gamma\subset X$ is a reducible curve then a Mori extraction $\sigma\colon X'\to X$ with these properties still may exist. However $\sigma$ will not be an extremal extraction. Indeed the relative Picard rank $\rho_{X'/X}$ will equal the number of irreducible components of $\Gamma$. 
\end{rmk}

\paragraph{Step 4.} Let $q$ be the index of $X$, i.e.\ $K_X = -qA$, and let $E\in |eA|$. According to the proof of Lemma \ref{standardForm} we have $q=a+e+1$.

We define $q' = q - e = a+1$. Then, writing $A'=\sigma^*A$, we get
\[ K_{X'} = \sigma^*K_X + F = -qA' + F \quad \text{and} \quad E' = \sigma^*E - F = eA' - F. \]
In particular it follows that $K_{X'}=-(q'A' + E')$. We let $l = ra + r - 1$ and define $B' = A' + \frac{r}{l}E'$. Note that $\frac{l+1}{q'} = \frac{ra+r}{a+1} = r$ and hence we can write $K_{X'} = -q'B' + \frac1lE'$.

\paragraph{Step 5.} We now make the clever choice $d=l+re$ so that $B'$ will become numerically trivial when restricted to $E'$. 

On $E$ we have $dA|_E = r\Gamma$ since $P\in E$ has index $r$ and $\Gamma$ has degree $d$ (which is coprime to $r$). Moreover since $E\cong E'$ we must have $(F\cap E')\cong \Gamma$ and therefore it follows that $(dA' - rF)|_{E'}= 0$. Now we see that $\sO_{E'}(lB)=\sO_{E'}$ for this choice of $d$, since:
\[ (lB)|_{E'} = ( lA' + r E' )|_{E'} = ( (l + re)A' - r F )|_{E'} = ( dA' - r F )|_{E'} = 0. \]

\paragraph{Step 6.} Finally, by the Kodaira vanishing theorem we have that
\[ H^1(X',lB' - E')=H^1(X',K_{X'} + aA' - rK_{X'}) = 0 \]
which implies that the restriction map
\[ H^0(X',lB') \to H^0(E',\sO_{E'}) \]
is surjective. Then $|lB'|$ is a free linear system which is ample outside $E'$, numerically trivial along $E'$ and hence $B'$ is nef. 

The morphism $\pi\colon (X',B') \to (Y,B)$ defined by $Y=\Proj R(X',B')$ must be the contraction of the divisor $E'\cong\PP(1,a,ra-1)$ to a singularity of index $l$. Moreover since the discrepancy of $E'$ is $\tfrac1l$ we see that $\pi$ must be the Kawamata blowdown of $E'$ to the terminal cyclic quotient singularity $\tfrac1l(1,r,ra-1)$.

\paragraph{Conclusion.} All of the steps in this construction are valid provided we can show the existence of a divisorial extraction $\sigma\colon X'\to X$ with the properties that were claimed in condition (ii) of Step 3. In this case we will have constructed a Sarkisov link from $(X,A)$ to a $\QQ$-Fano 3-fold $(Y,B)$ of index $q'$ and degree $B^3=A'^2(A'+\tfrac{r}{l}E')=\tfrac{d}{l}A^3$.

\section{Divisorial extractions from singular curves}

We consider the six remaining cases of Table \ref{possibleCases} which satisfy $(ra-1)\mid d$ and check that the construction actually does work in these cases. We will call these six cases A.1-4 and B.1-2 according to the order in which they appear in Table \ref{Xtable}.

 \begin{table}
\arraycolsep=1.2pt\def\arraystretch{1.6}
\caption{The $\QQ$-Fano 3-folds $(X,A)$ which give genuine Sarkisov links.}
\begin{center}
\begin{tabular}{c|ccccm{2.2cm}ccm{2.4cm}c}
Case & $X$ & $\codim$ & $q_X$ & $A^3$ & \centering $\sB_X$ & $(r,a)$ & $d$ & \centering Singularity type & \\ \cline{1-9}
A.1 &$\PP^3$ & 0 & 4 & 1 & \centering $\emptyset$ &  $(2,1)$ & 7 & \centering  $\Gamma_{(3)}$ & 	 \\
A.2 & $X_2\subset \PP^4$  & 1 & 3 & 2 & \centering $\emptyset$ & $(2,1)$  & 5 & \centering  $\Gamma_{(3)}$ &  \\ 
A.3 & $\PP(1^3,2)$ & 0 & 5 & $\tfrac12$ & \centering $\tfrac12(1,1,1)$ & $(3,1)$ & 14 & \centering $\Gamma_{(1,3)}$ or $\Gamma_{(4,0)}$ &  \\
A.4 & $X_4\subset \PP(1^2,2^2,3)$ & 1 & 5 & $\tfrac13$ & \centering $2\times\tfrac12(1,1,1)$, $\tfrac13(1,2,2)$  & $(2,2)$  & 9 & \centering $\Gamma_{(3)}$ & \\ \cline{1-9}
B.1 & $X_3\subset \PP(1^4,2)$  & 1 & 3 & $\tfrac32$ & \centering $\tfrac12(1,1,1)$ &$(3,1)$ & 8 & \centering $\Gamma_{(4,0)}$ &   \\ 
B.2 &  $X_4\subset \PP(1^3,2,3)$   & 1 & 4 & $\tfrac23$ & \centering  $\tfrac13(1,1,2)$ & $(4,1)$  & 15 & \centering $\Gamma_{(5,0,0)}$ &
\end{tabular} 
\end{center}
\label{Xtable}
\end{table}%

\subsection{Curves in type $A$ Du Val singularities}

We now change focus slightly and assume that we are in the \emph{local} setting. We take $P\in S$ to be the germ of a type $A_{r-1}$ Du Val surface singularity and we fix isomorphisms 
\[ (P\in S) \: \cong \: (0 \in \CC^2_{\alpha,\beta}) / \tfrac1r(1,-1)  \: \cong \: \big(0\in V(xz - y^r)\subset \CC^3_{x,y,z}\big) \]
where $(x,y,z)=(\alpha^r,\alpha\beta,\beta^r)$ are the invariant generators for the $\tfrac1r(1,-1)$ cyclic group action. As is well known, $P\in S$ has minimal resolution 
\[  \mu \colon ( D \subset \widetilde{S}) \to (P\in S) \]
where the exceptional divisor $D=\bigcup_{i=1}^{r-1} D_i$ is a chain of $-2$-curves of length $r-1$. In particular $D_i\cong\PP^1$ for all $i$ and coordinates on $D_i$ can be given in terms of the ratio $x/y^i=y^{r-i}/z$ or equivalently in terms of the orbinates $\alpha^{r-i}/\beta^i$. Given a curve $P\in \Gamma\subset S$ we write $\widetilde{\Gamma}$ for the birational transform of $\Gamma$ on $\widetilde{S}$.

\begin{defn}
We call  a curve germ $P\in\Gamma\subset S$ a \emph{singularity of type $\Gamma_{(a_1,\ldots,a_{r-1})}$} if $\widetilde{\Gamma}$ has $a_i$ branches intersecting $D_i$ transversely, for all $i$. (In particular $\widetilde{\Gamma}$ avoids the intersection points $D_{i-1}\cap D_i$.)
\end{defn}

\paragraph{The orbifold equation.} The \emph{orbifold equation} of $\Gamma$ is the equation $\gamma\in\CC[\alpha,\beta]$ defining $q^{-1}\Gamma \subset \CC^2$, the preimage of $\Gamma$ under the quotient map $q\colon \CC^2\to S$. If $P\in \Gamma$ is a singularity of type $\Gamma_{(a_1,\ldots,a_{r-1})}$ then the orbifold equation of $\Gamma$ factors analytically:
\[ \gamma(\alpha,\beta) = \prod_{i=1}^{r-1} \prod_{j=1}^{a_i} ( \lambda_{ij} \alpha^{r-i} - \mu_{ij} \beta^i ) \]
for some functions $\lambda_{ij},\mu_{ij}\in\CC[[x,y,z]]$ according to the branches of $\widetilde\Gamma$. We will usually multiply out this expression and collect together terms to write $\gamma$ as:
\[ \gamma(\alpha,\beta) = \sum_{j=0}^{a_1+\cdots+a_{r-1}} c_j \alpha^{m_j} \beta^{n_j}  \]
where $c_j\in\CC[x,y,z]$ are invariant polynomials with constant term $c_{j,0}\neq0$ and the points $(m_j,n_j)$ lie on the boundary of the Newton polygon $\Newt(\gamma)$. The faces of $\Newt(\gamma)$ have slope $-\tfrac{r-i}{i}$ for $i=1,\ldots,r-1$. If we let $A_i := \sum_{j\leq i}a_j$, then restricting $\gamma$ to a face gives 
\[ \sum_{j=A_{i-1}}^{A_i} c_{j,0} \alpha^{m_j} \beta^{n_j} = \alpha^{m_{A_{i-1}}}\beta^{n_{A_i}}\gamma_i(\alpha,\beta) \] 
where $\gamma_i$ is a homogeneous polynomial in $\alpha^{r-i},\beta^i$ of degree $a_i$ whose roots give the intersection points of $\widetilde\Gamma \cap D_i$. By assumption these roots are distinct and both $\alpha\nmid \gamma_i$ and $\beta\nmid \gamma_i$. We can consider \emph{degenerations} of $\Gamma$ by allowing the $\gamma_i$ to pick up multiple roots and by allowing some of the coefficients $c_{j,0}$ to vanish. 

\begin{eg}
For the $A_2$ singularity, the singularity types $\Gamma_{(1,3)}$ and $\Gamma_{(4,0)}$ have resolutions that look like the following:
\begin{center}\begin{tikzpicture}
	\node at (-0.5,0.75) {$\Gamma_{(1,3)}$};
	\draw (0,0) to[out=30,in=150] (3,0);
	\draw (2.5,0) to[out=30,in=150] (5.5,0);
	\draw[very thick] plot [smooth, tension=1.5] coordinates {(1.5,-0.5) (2.75,1) (4,-0.5) (4.5,1) (5,-0.5)};
	
	\node at (7,0.75) {$\Gamma_{(4,0)}$};
	\draw (7.5,0) to[out=30,in=150] (10.5,0);
	\draw (10,0) to[out=30,in=150] (13,0);
	\draw[very thick] plot [thick, smooth, tension=1.5] coordinates {(8,1) (8.5,-0.5) (9,1) (9.5,-0.5) (10,1)};
\end{tikzpicture}\end{center}
In both cases a format for the orbifold equation is given in Proposition \ref{singularProp}.
\end{eg}

\paragraph{Weighted $A_{r-1}$ singularities.} Going back to Table \ref{Xtable}, the orbinates $\alpha,\beta$ at $P\in E$ are naturally weighted with weights $\tfrac1r, \tfrac{ra-1}{r}$ (and hence $x,y,z$ have weights $1,a,ra-1$). If $\Gamma\subset E$ is a curve of degree $d$ then each term appearing in the orbinate equation $\gamma$ must have degree $\leq\tfrac{d}{r}$ with respect to these weights. Since we chose $d=l+re=qr-1\equiv -1 \mod r$, it follows that $\Gamma$ has an orbifold equation of the form $\gamma = \alpha^{r-1}\phi + \beta\psi$, where $\phi,\psi\in\CC[x,y,z]$ have degree $\leq q-1$.

\subsection{`Appropriately singular' curves}

Proposition \ref{singularProp} describes precisely what is meant by the statement `appropriately singular' in Step 3(ii) of the construction.

\begin{prop}
Suppose that $P\in\Gamma \subset S\subset U$ where $P\in S$ is an $A_{r-1}$ Du Val singularity with $r\leq4$, $P\in U$ is a smooth 3-fold and $\Gamma$ is a curve of degree $d\equiv r-1\mod r$. Suppose that there exists a Mori extraction $\sigma\colon (F\subset U') \to (\Gamma\subset U)$ where $U'$ has a single high index singularity of type $\tfrac1r(1,1,-1)$. Then the birational transform $S'=\sigma^{-1}S$ is isomorphic to $S$ and $P\in\Gamma$ is one of the following singularity types (up to a degeneration).
\begin{description}
\item[Type $A_1$.] The only possibility is $P\in \Gamma_{(3)}$ with multiplicity 3 and orbinate equation:
\[ \gamma_{(3)}  = a\alpha^3 + b\alpha^2\beta + c\alpha\beta^2 + d\beta^3  \]
\item[Type $A_2$.] There are two possibilites:
\begin{enumerate}
\item[(i)] $P\in \Gamma_{(1,3)}$ with multiplicity 4 and orbinate equation:
\[ \gamma_{(1,3)} = a\alpha^5 + b\alpha^3\beta + c\alpha^2\beta^3 + d\alpha\beta^5 + e\beta^7 \]
\item[(ii)] $P\in \Gamma_{(4,0)}$ with multiplicity 4 and orbinate equation:
\[ \gamma_{(4,0)} = a\alpha^8 + b\alpha^6\beta + c\alpha^4\beta^2 + d\alpha^2\beta^3 + e\beta^4  \]
\end{enumerate}
\item[Type $A_3$.] Suppose moreover that $\alpha,\beta$ have weights $\tfrac14,\tfrac34$ and that $\Gamma$ has degree $\leq 15$. Then the only possibility is $P\in \Gamma_{(5,0,0)}$ with multiplicity 5 and orbinate equation:
\[ \gamma_{(5,0,0)} = a\alpha^{15} + b\alpha^{12}\beta + c\alpha^9\beta^2 + d\alpha^6\beta^3 + e\alpha^3\beta^4 + f\beta^5  \] 
\end{description}
\label{singularProp}
\end{prop}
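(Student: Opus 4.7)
My strategy is to exploit the rigidity of the Mori extraction $\sigma$ to constrain the Newton polygon of the orbifold equation $\gamma$ case by case in $r = 2, 3, 4$.

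The hypothesis $S' \cong S$ says that $\sigma|_{S'} \colon S' \to S$ is an isomorphism of Du Val singularities, so on the minimal resolution $\widetilde{S}$ the strict transform $\widetilde{\Gamma}$ must meet each $(-2)$-curve $D_i$ transversely in $a_i$ distinct points and avoid the nodes $D_{i-1} \cap D_i$; this already puts $\Gamma$ into some type $\Gamma_{(a_1,\ldots,a_{r-1})}$. The remaining content of the proposition is then to enumerate which tuples $(a_i)$ actually arise, and to read off the generic orbifold equation from the boundary of $\Newt(\gamma)$.

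I would then use the description of $\sigma$ as the symbolic blowup of $\sI_{\Gamma/U}$ (c.f.\ \cite{duc1} Proposition 1.4) together with the constraint that the unique non-Gorenstein point of $X'$ be of type $\tfrac1r(1,1,-1)$. A discrepancy computation balancing $a(F,U) = 1$ against the contribution of this quotient point should pin the multiplicity $m$ of $\gamma$ at $P$ to be $3, 4, 5$ for $r = 2, 3, 4$ respectively. Given $m$ and the slopes $-(r-i)/i$ of the faces of $\Newt(\gamma)$, one enumerates the admissible splittings: $r=2$ forces $a_1 = 3$; $r = 3$ allows exactly $(1,3)$ and $(4,0)$; $r = 4$ under the additional degree bound $d \leq 15$ collapses $\Newt(\gamma)$ onto the single slope-$(-3)$ face and forces $(5,0,0)$. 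The displayed equations then follow by listing all invariant monomials on the boundary of the polygon, subject to the weighted-degree bound and the shape $\gamma = \alpha^{r-1}\phi + \beta\psi$ forced by $d \equiv r-1 \bmod r$.

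The main obstacle should be the first step, rigorously identifying $m$ in each rank. One has to juggle three interacting contributions --- the discrepancy $1$ of $F$ over the smooth ambient 3-fold $U$, the index-$r$ quotient on $X'$, and the resolution discrepancies of the underlying Du Val point on $S$ --- and in particular must verify that no additional high-index singularities sneak onto $X'$. This is the delicate part: it is precisely when $\Gamma$ passes through $P$ with too high a multiplicity, or with the wrong distribution of tangent directions among the $D_i$, that extra quotient points appear on $X'$ and the extraction fails to be of the required form.
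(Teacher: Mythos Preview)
Your plan has two genuine problems.

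First, you treat $S'\cong S$ as a hypothesis and deduce from it that $\Gamma$ lies in some $\Gamma_{(a_1,\ldots,a_{r-1})}$ class. But in the proposition $S'\cong S$ is part of the \emph{conclusion}, not an assumption; it has to come out of the explicit construction of $\sigma$ at the end, not be fed in at the start.

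Second, and more seriously, the mechanism you propose for pinning down the tuple $(a_i)$ is not the one that actually works. A ``discrepancy computation balancing $a(F,U)=1$ against the $\tfrac1r$ point'' does not by itself determine the multiplicity $m$: the discrepancy of $F$ over the smooth $U$ is $1$ regardless of how singular $\Gamma$ is at $P$, and several distinct tuples $(a_i)$ (e.g.\ $(2,2)$, $(3,1)$ in the $A_2$ case) give the same multiplicity $4$ at $P$. The paper's argument is quite different. One looks at the pencil of hypersurface sections $H_{\lambda,\mu}\subset U$ through $\Gamma$ and observes that if $P\in H_{\lambda,\mu}$ were Du Val of type $A_{\leq r-2}$ for some $(\lambda,\mu)$, then after a change of coordinates the extraction would be governed by a lower-rank case and could only produce a singularity of index $\leq r-1$ on $U'$. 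Hence every $h_{\lambda,\mu}$ must have an $A_{\geq r-1}$ point at $P$, i.e.\ must factor as $XZ\bmod\frakm^r$ for \emph{all} $\lambda,\mu$. Expanding this factorisation condition modulo successive powers of $\frakm$ and demanding it hold identically in $\lambda,\mu$ kills the low-order coefficients of $\phi,\psi$ one by one, forcing $\gamma$ into the stated shape. The sufficiency direction (that such $\Gamma$ really do give a $\tfrac1r(1,1,-1)$ point and $S'\cong S$) is then checked by building $\sigma$ explicitly via serial unprojection as in \cite{duc1}. Your Newton-polygon enumeration is a reasonable bookkeeping device once the coefficients have been constrained, but it is the hyperplane-section/factorisation step that does the real work, and your outline is missing it.
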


\begin{rmk}
If we remove the condition on the degree in the $A_3$ case then, with more work, it is possible to show that the curve singularities satisfying the conditions of the Theorem are precisely $\Gamma_{(5,0,0)}$, $\Gamma_{(1,0,4)}$, $\Gamma_{(0,3,1)}$ and their degenerations. However the last two singularity types cannot occur in case B.2 since $\gamma_{(1,0,4)}$ and $\gamma_{(0,3,1)}$ both contain terms of degree $>\tfrac{15}4$. 
\end{rmk}

\begin{proof}
These statements can be checked by constructing divisorial extractions by unprojection, as in the style of \cite{duc1}. Prokhorov \& Reid treat the $A_1$ case \cite{prokreid} \S6.1, and both the $A_1$ and $A_2$ cases appear in \cite{duc1} \S3. Therefore we only consider the $A_3$ case.

Since we are assuming that $\deg\gamma\leq\tfrac{15}{4}$, we can write the orbifold equation of $\Gamma$ as
\[ \gamma(\alpha,\beta) = \alpha^3 \phi(x,y) + \beta \psi(y,z) \]
where $\phi,\psi$ are functions of degree $\leq3$ and the variables $x,y,z$ have weights $1,1,3$. Therefore we can write $\phi$ and $\psi$ as
\[ \phi = ax + by + cx^2 + dxy + ey^2 + \phi_3, \quad \psi = fy + gy^2 + hz + \psi_3 \]
where $a,\ldots,h\in\CC$ are constants and $\phi_3,\psi_3$ only contain terms of degree 3. We can assume that $h\neq0$, else $\alpha\mid \gamma$ and hence $\gamma$ is reducible. To show that $\Gamma$ has the form stated in the Theorem we must show that $a,\ldots,g=0$.

As a subvariety of $U$ our curve $\Gamma$ is defined by the minors:
\[ \bigwedge^2\begin{pmatrix}
x & y^3 & -\psi(y,z) \\
y & z & \phi(x,y)
\end{pmatrix} = 0 \]
For $\lambda,\mu\in\CC$, let $H_{\lambda,\mu}$ be the hyperplane section containing $\Gamma$ which is given by the equation:
\[ h_{\lambda,\mu} := xz - y^4 + \lambda(x\phi + y\psi) + \mu(y^3\phi + z\psi) = 0 \]
If, for some $\lambda,\mu$, $P\in H_{\lambda,\mu}$ is Du Val singularity of type $A_{\leq2}$ then, by changing coordinates and following the constructions in the $A_{\leq2}$ cases, the divisorial extraction from $\Gamma$ will have a singularity of index $\leq3$. Therefore in order for $U'$ to have a $\tfrac14(1,1,3)$ singularity we require that $P\in H_{\lambda,\mu}$ is a type $A_3$ singularity or worse $\forall\lambda,\mu\in\CC$. By the finite determinancy of Du Val singularities, this happens if and only if $h_{\lambda,\mu}$ has a factorisation as a product $h_{\lambda,\mu}\equiv XZ \mod \frakm^4$ for some $X,Z$, $\forall\lambda,\mu\in\CC$.

First we check which conditions are needed for $h_{\lambda,\mu}$ to factor as a product mod $\frakm^3$:
\[  h_{\lambda,\mu} \equiv xz + \lambda x (ax + by) + \lambda y(fy + hz) + \mu z(fy + hz) \mod\frakm^3 \]                        
This factors if and only if the discriminant of this quadratic form vanishes identically, i.e.: 
\[ \lambda \big( a(h\lambda - f\mu)^2 + b^2h\lambda\mu - b(h\lambda + f\mu) + f \big) = 0, \quad \forall\lambda,\mu\in\CC \]
Since we are assuming $h\neq0$ this implies $a=b=f=0$.

Now it is possible to check that $h_{\lambda,\mu}$ can be factorised mod $\frakm^4$ as follows:
\begin{align*}
 h_{\lambda,\mu} &\equiv xz + \lambda x (cx^2 + dxy + ey^2) + \lambda y(gy^2 + hz) + \mu z(gy^2 + hz) \\
 &\equiv XZ - \lambda(ch^3 \lambda^3 - dh^2 \lambda^2 + eh \lambda - g)y^3 & \mod \frakm^4
\end{align*} 
where
\begin{align*}
X & = x + h(\lambda y + \mu z) + g\mu y^2 - h \lambda\mu (cx^2 + dxy + ey^2) + h^2 \lambda^2\mu y (cx + dy) - ch^3 \lambda^3\mu y^2 \\
Z &= z + \lambda (cx^2 + dxy + ey^2) - h \lambda^2 y(cx + dy)  + ch^2 \lambda^3 y^2 
\end{align*}
Therefore $h_{\lambda,\mu}$ factorises mod $\frakm^4$ if and only if 
\[ ch^3 \lambda^3 - dh^2 \lambda^2 + eh \lambda - g = 0, \quad \forall\lambda\in\CC \]
and this implies that $c=d=e=g=0$.

We have proved that it is \emph{necessary} for $\Gamma$ to be of the form stated in the Theorem. To prove that the symbolic blowup of such a curve $\Gamma$ actually does have a $\tfrac14(1,1,3)$ singularity and that $S'\cong S$ we can construct $U'$ explicitly using the serial unprojection method of \cite{duc1}. This gives $\sigma\colon U' \to U$ as the $\Proj$ of a Gorenstein ring which has relative codimension 5 and 14 equations:
\[ \sigma\colon U' \subset U\times \PP(1,1,1,2,3,4) \to U \]
where $U'$ has a $\tfrac14(1,1,3)$ singularity at the last coordinate point. In the general case the central fibre $\sigma^{-1}(P)$ consists of five rational curves meeting at the $\tfrac14$-point. If $\gamma$ degenerates to obtain a root of multiplicity $m$, then $m$ of the curves in the central fibre come together and $U'$ picks up an isolated $cA_{m-1}$ point along this curve.
\end{proof}

\subsection{Proof of Theorem \ref{mainThm}}
\label{proofOfMainThm}

The last thing left to check in the proof of Theorem \ref{mainThm} is the existence in each case of a divisorial extraction $\sigma \colon X' \to X$ which has the properties claimed in Step 3(ii) of the construction. This is equivalent to checking that a degree $d$ curve $\Gamma\subset E\subset X$ can have one of the singularity types appearing in Proposition \ref{singularProp}. 

\paragraph{Possible singularity types.}
We simply have to check whether it is possible to take coefficients of degree $\geq0$ in the relevant orbifold equation of Proposition \ref{singularProp}. Only case B.1 and the singularity type $\Gamma_{(1,3)}$ is impossible. In the other cases we get the following singularity types by taking coefficients of the following degrees:
\begin{description}
\item[(A.1)] Singularity type $\Gamma_{(3)}$ if we take $a_2, b_2, c_2, d_2 \in \CC[ u_1, x_1, y_1, z_1]$.
\item[(A.2)] Singularity type $\Gamma_{(3)}$ if we take $a_1, b_1, c_1, d_1 \in \CC[ u_1, x_1, y_1, z_1]$.
\item[(A.3.i)] Singularity type $\Gamma_{(1,3)}$ if we take $a_3, b_3, c_2, d_1, e_0 \in \CC[ u_1, x_1, y_1, z_2]$.
\item[(A.3.ii)] Singularity type $\Gamma_{(4,0)}$ if we take $a_2, b_2, c_2, d_2, e_2 \in \CC[ u_1, x_1, y_1, z_2]$.
\item[(A.4)] Singularity type $\Gamma_{(3)}$ if we take $a_3, b_2, c_1, d_0 \in \CC[ u_1, x_1, y_2, z_3]$.
\item[(B.1)] Singularity type $\Gamma_{(4,0)}$ if we take $a_0, b_0, c_0, d_0, e_0 \in \CC[ u_1, x_1, y_1, z_2]$.
\item[(B.2)] Singularity type $\Gamma_{(5,0,0)}$ if we take $a_0, b_0, c_0, d_0, e_0, f_0 \in \CC[ u_1, x_1, y_1, z_3]$.
\end{description}

\paragraph{Irreducibility.} It is not hard to check that the generic curve with such a choice of coefficents is irreducible in each of the cases A.1-4. However in the B.1 case, since the coefficients are all constant, the orbifold equation for $\Gamma$ is a quartic polynomial in $\alpha^2$ and $\beta$. Thus $\Gamma$ decomposes as a union of four irreducible smooth curves (which meet non-transversely) corresponding to the roots of this quartic. Since $\rho_{X'}=5$ we must have $\rho_Y=4$. Similarly in the B.2 case $\Gamma$ has five irreducible smooth components and $\rho_Y=5$.

\section{Constructing $Y$ by unprojection}

In their treatment of the A.1 and A.2 cases, Prokhorov \& Reid \cite{prokreid} \S6.4 also explain how to construct these two families explicitly by unprojection. Similar unprojection constructions are also possible for the remaining cases of Table \ref{Ytable} which are essentially equivalent to the unprojection construction of the divisorial extraction in the relative setting. We will only construct and study the two families in the A.3 case. The enthusiast will enjoy working out the remaining cases A.4, B.1, B.2 for his or herself. 

\subsection{The two A.3 families}  \label{constructingY}

By following the method in \S1 for the A.3 case we construct a $\QQ$-Fano $(Y,B)$ of index $q'=2$, degree $B^3=\tfrac75$ and $\dim|B|=3$ which has a single $\tfrac15(1,2,4)$ quotient singularity. By the Ice Cream formula \cite{iceCream} the Hilbert series of $Y$ is:
\[ P_{Y,B}(t) = \frac{1+t^2}{(1-t)^4} + \frac{t^2+t^4}{(1-t)^3(1-t^5)}  =  \frac{N(t)}{(1-t)^3(1-t^2)^2(1-t^3)(1-t^4)(1-t^5)} \]
where the numerator is the degree 17 polynomial:
\begin{align*} 
N(t) &= 1 - 2t^4 - 2t^5 - 2t^6 + 2t^7 + 3t^8 + 3t^9 + 2t^{10} - 2t^{11} - 2t^{12} - 2t^{13} + t^{17} \\
 &=  1 - (2t^4 + 2t^5 + 3t^6 + t^7 + t^8) + (t^6 + 3t^7 + 4t^8 + 4t^9 + 3t^{10} + t^{11}) - \cdots 
\end{align*}
where $\cdots$ denotes the usual palindromic Gorenstein symmetry. We claim that $Y$ has three masked relations in degrees $6,7,8$ in addition to the six relations in degrees $4^2,5^2,6^2$ which can be read off from $N(t)$. This will be justified by the unprojection calculation below and it suggests that $Y$ has a presentation in the familiar Gorenstein codimension 4 format with 9 equations and 16 syzygies:
\[ Y_{4^2,5^2,6^3,7,8}\subset \PP(1^3,2^2,3,4,5)_{u_1,x_1,y_1,z_2,\xi_2,\nu_3,\zeta_4,\theta_5} \]

We can assume that the orbinates at the $\tfrac15(1,2,4)$ point $P_\theta\in Y$ are given by $u_1,\xi_2,\zeta_4$. Then $Y$ has four equations of the form $x\theta,y\theta,z\theta,\nu\theta=\cdots$ which have degrees $6,6,7,8$ and eliminate these four variables at this point. We consider the projection from $P_\theta\in Y$:
\[ \widehat\theta \colon (P_\theta \in Y) \dashrightarrow (\Pi \subset Y')\subset \PP(1^3,2^2,3,4) \]
This corresponds to the $(1,2,4)$-weighted blowup of $P_\theta$ followed by a small contraction of some curves in the tangent space $T_\theta Y\cap Y$. This takes us out of the Mori category since $Y'$ contains a line of $\tfrac12(1,1)$ singularities with a dissident $\tfrac14(1,2,3)$ point at $P_\zeta\in Y'$ (in addition to the non-$\QQ$-factorial singularities given by the small contraction). In particular the orbinates at $P_\zeta\in Y'$ are $u_1,\xi_2,\nu_3$ and hence $Y'$ has equations $x\zeta,y\zeta,z\zeta=\cdots$ in degrees $5,5,6$ which eliminate $x,y,z$ at $P_\zeta\in Y$. We have found the masked equations in degrees $6,7,8$ as claimed.

$Y'$ is a special 3-fold in codimension 3 containing the plane $\Pi = \PP(1,2,4)_{(u:\xi:\zeta)}$ which is the exceptional divisor of the projection. We reverse this process to construct $Y$ by \emph{unprojecting} $\Pi\subset Y'$.

\paragraph{The special 3-fold $Y'$.}
The projection gives a codimension 3 variety $Y'$ defined by the maximal Pfaffians of a $5\times5$ skew matrix $M$ with entries of the following weights:
\[ Y' = \left( \Pf \left( \begin{smallmatrix} 4&3&3&2 \\ &3&3&2 \\ &&2&1 \\ &&&1 \end{smallmatrix} \right) = 0 \right) \subset \PP(1^3,2^2,3,4)_{u,x,y,z,\xi,\nu,\zeta} \]
Without loss of generality we can use row and column operations and rename variables to write $M$ in the form:
\[ M = \begin{pmatrix}
\zeta & \nu & t_1\xi + d'_3 & -e'_2 \\ 
 & -a'_3 & \nu + c'_3 & \xi \\
 & & z & y \\
 & & & x
\end{pmatrix} \]
where $a'\in\CC[u,x,y]$, $e'\in\CC[u,y,z]$ and  $b',c',t\in\CC[u,y]$ are polynomials of the indicated degree. Now we must impose some further conditions to ensure that $Y'$ contains $\Pi$, or equivalently that the equations $\Pf M =0$ are contained in the ideal $I_\Pi=(x,y,z,\nu)$. The two ways of doing this are given by the Tom and Jerry formats of \cite{bkr}. In our case $M$ is in Tom$_2$ format for the ideal $I_\Pi$ if $d',e',t\in I_\Pi$ (i.e.\ all entries except those in the second row and column are in $I_\Pi$) and in Jer$_{34}$ format if $a',c',d',t\in I_\Pi$ (i.e.\ all entries in the third and fourth row and column are in $I_\Pi$). In either case we have $t=t_1(u,y)\in I_\Pi$ and hence we can take $t=y$.

\paragraph{Recovering $\Gamma\subset X$.} 
In direct analogy to Prokhorov \& Reid's example (see \cite{prokreid} \S6.4 and equation (6.4.4)), by projecting from $P_\zeta\in Y'$ we see that we can construct $Y$ as a double unprojection, starting from the complete intersection:
\[ \left( \begin{pmatrix} 
x & y^2 & -(d'y + e'z) \\
y & z & a'x + c'y
\end{pmatrix}
\begin{pmatrix} 
\nu \\ -\xi \\ 1
\end{pmatrix} =0 \right) \quad \subset \quad \PP(1^3,2^2,3)_{u,x,y,z,\xi,\nu} \]
with first unprojection ideal $(x,y,z)$. Taking the minors of the $2\times 3$ matrix appearing in this format recovers the equations of the curve $\Gamma\subset X=\PP(1,1,2,3)_{u,x,y,z}$ which was blown up in the construction of the Sarkisov link.

\subsubsection{The Tom$_2$ family $\mathcal{T}$} 
For $M$ to be in Tom$_2$ format we must take $d',e'\in I_\Pi$ in addition to $t=y$. We can take $d_3'=d_2y$ and $e_2'=e_1y+f_0z$. Then $M$ and the unprojection equations for the ideal $I_\Pi$ are:
\[ \begin{pmatrix}
\zeta & \nu & y(\xi + d) & -(ey + fz) \\
 & -a' & \nu + c' & \xi \\
 & & z & y \\
 & & & x
\end{pmatrix} \quad \begin{array}{rl} 
\vspace{0.1cm} x\theta & \!\!\!  = \xi^2(\xi + d) + e\xi(\nu+c') + f(\nu+c')^2 \\ 
\vspace{0.1cm} y\theta & \!\!\!  = \xi\zeta - a'f(\nu + c') \\ 
\vspace{0.1cm} z\theta & \!\!\! = (\nu + c')(\zeta + a'e) + a'\xi(\xi + d) \\ 
\nu\theta & \!\!\! = \zeta(\zeta + a'e) + a'^2f(\xi + d)
\end{array} \]
We note that in this case the curve $P\in\Gamma\subset X$ has the orbifold equation:
\[ \gamma_\mathcal{T}(\alpha,\beta) = a'_3\alpha^5 + c'_3\alpha^3\beta + d_2\alpha^2\beta^3 + e_1\alpha\beta^5 + f_0\beta^7 \]
which is the orbifold equation for a singularity of type $\Gamma_{(1,3)}$.


\subsubsection{The Jer$_{34}$ family $\mathcal{J}$}
For $M$ to be in Jer$_{34}$ format we must take $a',c',d'\in I_\Pi$ in addition to $t=y$. This time we can take $a_3'=a_2x+b_2y$, $c_3'=c_2y$ and $d_3'=d_2y$. Then $M$ and the unprojection equations for the ideal $I_\Pi$ are:
\[ \begin{pmatrix}
\zeta & \nu & y(\xi + d) & -e' \\
 & -(ax+by) & \nu + cy & \xi \\
 & & z & y \\
 & & & x
\end{pmatrix} \quad \begin{array}{rl} 
\vspace{0.1cm} x\theta & \!\!\! = \xi^3 + d\xi^2 + ce'\xi + e'(\zeta + be') \\
\vspace{0.1cm} y\theta & \!\!\! = \xi\zeta - ae'^2 \\
\vspace{0.1cm} z\theta & \!\!\! = \nu(\zeta + be') + (ax+by)\xi(\xi + d) + c\xi\nu + ae'y(\xi+d) \\
\nu\theta & \!\!\! = \zeta(\zeta + be') + ae'(\xi^2 + d\xi + ce')
\end{array} \] 
In this case the orbifold equation of the curve $P\in\Gamma\subset X$ is
\[ \gamma_\mathcal{J}(\alpha,\beta) = a_2\alpha^8 + b_2\alpha^6\beta + c_2\alpha^4\beta^2 + d_2\alpha^2\beta^3 + e_2'\beta^4 \]
which is the orbifold equation for a singularity of type $\Gamma_{(4,0)}$.


\subsection{A common degeneration}

By construction we see that every $\QQ$-Fano 3-fold $Y$ with the numerical invariants given in \S\ref{constructingY} belongs to one of the two families $\mathcal{T}$ and $\mathcal{J}$. Moreover these correspond precisely to two families of $\QQ$-Fano 3-folds which admit a Sarkisov link to $X=\PP(1,1,1,2)$ ending in the divisorial contraction to $\Gamma\subset X$, a degree 14 curve either with orbifold equation $\gamma_\mathcal{T}$ and a singularity of type $\Gamma_{(1,3)}$, or with orbifold equation $\gamma_\mathcal{J}$ and a singularity of type $\Gamma_{(4,0)}$. 

The format for $Y$ is determined by the curve $\Gamma\subset X$, which in turn is determined by the orbifold equation, up to rescaling by a constant. Counting the number of monomials in $\gamma_\mathcal{T}$ and $\gamma_\mathcal{J}$ we see that the dimension of each family is given by:
\[ \dim \mathcal{T} = 37 - 1 = 36 \quad \text{and} \quad \dim \mathcal{J}= 35-1=34. \]
Moreover these two families have a common degeneration corresponding to the curve singularity $P\in \Gamma$ of type $\Gamma_{3,2}$ with orbifold equation:
\[ \gamma_{3,2}(\alpha,\beta) = a_2\alpha^8 + b_2\alpha^6\beta + c_2\alpha^4\beta^2 + d_2\alpha^2\beta^3 + e_1\alpha\beta^5 + f_0\beta^7  \]
which is obtained when $M$ is simultaneously in both Tom$_2$ and Jer$_{34}$ format (i.e.\ take either $a_3' = a_2x+b_2y$, $c'_3=c_2y$ in the format for $\mathcal{T}$ or $e_2'=e_1y+f_0z$ in the format for $\mathcal{J}$). Similarly this intersection has $\dim (\mathcal{T}\cap \mathcal{J})=31$ and these families fit together as in Figure \ref{twoComps}. 
\begin{figure}[h]
\begin{center}
\begin{tikzpicture}[scale=1]
	\node at (1.6,1.8) {\small $\mathcal{T}$ : $\Gamma_{(1,3)}$};
	\node at (1.6,1.3) {\small $\dim = 36$};
	
	\node at (3,3.4) {\small $\mathcal{T}\cap\mathcal{J}$ : $\Gamma_{(3,2)}$};
	\node at (3,2.9) {\small $\dim = 31$};
	\draw[->] (3,2.7) -- (3,2.4);
	
	\node at (4.4,1.8) {\small $\mathcal{J}$ : $\Gamma_{(4,0)}$};
	\node at (4.4,1.3) {\small $\dim = 34$};

	\draw[very thick] (3,2.2) -- (3,-0.2);
	\draw[thick] (0,1) -- (3,0) -- (3,2) -- (1,3) -- cycle;
	\draw[thick] (6,1) -- (3,0) -- (3,2) -- (5,3) -- cycle;
\end{tikzpicture}
\caption{The two families $\mathcal{T}$ and $\mathcal{J}$.}
\label{twoComps}
\end{center}
\end{figure}
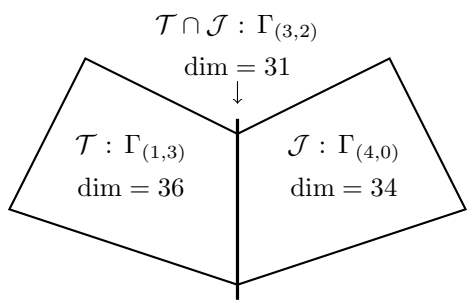

Now suppose that $Y$ is a general member of the intersection $\mathcal{T}\cap \mathcal{J}$. Since $M$ is in both Tom$_2$ and Jer$_{34}$ format simultaneously it follows from looking at either format, $\mathcal{T}$ or $\mathcal{J}$, that all of the nine equations defining $Y$ are contained in $\frakm_{u}^2$, where $\frakm_u$ is the maximal ideal of the point $P_u\in Y$. Hence $P_u\in Y$ is an index 1 singularity which has embedding dimension 7 and therefore cannot be terminal. It would be interesting to know if such a singularity is canonical.


\begin{thebibliography}{10000}

\bibitem{bkr} G.\ Brown, M.\ Kerber and M.\ Reid, Fano $3$-folds in codimension 4, Tom and Jerry, Part I. {\em Compos. Math.} {\bf 148} (2012), 1171--1194.

\bibitem{grdb} G.\ Brown, A.M.\ Kasprzyk and others, The graded ring database, \verb!www.grdb.co.uk!

\bibitem{iceCream} A.\ Buckley, M.\ Reid and S.\ Zhou, Ice cream and orbifold Riemann--Roch, {\em Izv.\ Math.}
{\bf 77}, 2013, 461--486

\bibitem{duc1} T.\ Ducat, Divisorial extractions from singular curves in smooth 3-folds. {\em Int.\ J Math.}, {\bf 27}, Issue 01 (2016), 23 pp.




\bibitem{prokreid} Y.\ Prokhorov and M.\ Reid, On Q-Fano threefolds of Fano index 2, in {\em Minimal Models and Extremal Rays (Kyoto 2011)}, Adv.\ Stud.\ in Pure Math., {\bf 70}, 2016,  397--420


\end{thebibliography}
\end{document}